\author{Carlo Sanna}
\address{Universit\`a degli Studi di Torino\\Department of Mathematics\\Turin, Italy}
\email{carlo.sanna.dev@gmail.com}
\keywords{Arithmetic functions, sum of divisors, topological closure, asymptotic densities}
\subjclass[2010]{Primary: 11A25, 11N37. Secondary: 11N64, 11Y99.}
\title{On the closure of the image of the generalized~divisor~function}
\newtheorem{thm}{Theorem}[section]
\newtheorem{lem}[thm]{Lemma}
\theoremstyle{remark}
\def\Cl{\operatorname{Cl}}
\begin{document}

\begin{abstract}
For any real number $s$, let $\sigma_s$ be the generalized divisor function, i.e., the arithmetic function defined by $\sigma_s(n) := \sum_{d \, \mid \, n} d^s$, for all positive integers $n$.
We prove that for any $r > 1$ the topological closure of $\sigma_{-r}(\mathbf{N}^+)$ is the union of a finite number of pairwise disjoint closed intervals $I_1, \ldots, I_\ell$.
Moreover, for $k=1,\ldots,\ell$, we show that the set of positive integers $n$ such that $\sigma_{-r}(n) \in I_k$ has a positive rational asymptotic density $d_k$.
In fact, we provide a method to give exact closed form expressions for $I_1, \ldots, I_\ell$ and $d_1, \ldots, d_\ell$, assuming to know $r$ with sufficient precision.
As an example, we show that for $r = 2$ it results $\ell = 3$, $I_1 = [1, \pi^2/9]$, $I_2 = [10/9, \pi^2/8]$, $I_3 = [5/4, \pi^2 / 6]$, $d_1 = 1/3$, $d_2 = 1/6$, and $d_3 = 1/2$.
\end{abstract}

\maketitle

\section{Introduction}

For any real number $s$, let the \emph{generalized divisor function} $\sigma_s$ be defined by 
\begin{equation*}
\sigma_s(n) := \sum_{d \, \mid \, n} d^s ,
\end{equation*}
for each positive integer $n$, where $d$ runs over all the positive divisors of $n$.
It is well-known that $\sigma_s$ is a multiplicative arithmetic function.
For $r > 1$ it can be readily proved that $\sigma_{-r}(\mathbf{N}^+) \subseteq [1, \zeta(r)[$, where $\zeta$ is the Riemann zeta function.
Moreover, Defant~\cite{Def14} proved that $\sigma_{-r}(\mathbf{N}^+)$ is dense in $[1, \zeta(r)[$ if and only if $r \leq \eta$, where $\eta \approx 1.8877909$ is the unique real number in $]1, 2]$ that satisfies the equation
\begin{equation*}
\frac{2^\eta}{2^\eta-1} \cdot \frac{3^\eta + 1}{3^\eta-1} = \zeta(\eta) .
\end{equation*}
In this paper, we shall prove the following result about the topological closure of $\sigma_{-r}(\mathbf{N}^+)$.

\begin{thm}\label{thm:closure}
For any $r > 1$, the topological closure of $\sigma_{-r}(\mathbf{N}^+)$ is the union of a finite number of pairwise disjoint closed intervals $I_1, \ldots, I_\ell$.
Furthermore, for $k=1,\ldots,\ell$, the set of positive integers $n$ such that $\sigma_{-r}(n) \in I_k$ has a positive rational asymptotic density $d_k$.
\end{thm}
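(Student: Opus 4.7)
The plan is to split every $n \in \mathbf{N}^+$ multiplicatively as $n = n_1 n_2$, where $n_1$ is composed only of primes at most a suitably chosen threshold $N = N(r)$ and $n_2$ only of primes exceeding $N$, so that $\sigma_{-r}(n) = \sigma_{-r}(n_1)\,\sigma_{-r}(n_2)$, with the factor from $n_2$ densely covering a fixed interval and the factor from $n_1$ determined by the exponents of the primes $\le N$ dividing $n$.

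First I would choose $N$ to be the smallest integer such that the gap-filling inequality
\begin{equation*}
1 + p^{-r} \;\le\; \prod_{q > p,\; q\text{ prime}} \bigl(1-q^{-r}\bigr)^{-1}
\end{equation*}
holds for every prime $p > N$. Such an $N$ is finite because, for large $p$, the left side is of order $p^{-r}$ while the right side exceeds $1 + c\,p^{1-r}/\log p$ (by the prime number theorem). A greedy construction in the spirit of Defant~\cite{Def14} then yields that $\{\sigma_{-r}(m) : m \text{ has all prime factors} > N\}$ is dense in $[1, L_N]$, where $L_N := \prod_{p > N}(1-p^{-r})^{-1}$.

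From the decomposition above, the topological closure $C := \overline{\sigma_{-r}(\mathbf{N}^+)}$ equals $K \cdot [1, L_N]$, where $K$ is the compact set of all products $\prod_{p \le N} t_p$ with each $t_p$ in the one-point compactification $\overline{T_p} := \{\sigma_{-r}(p^a) : a \ge 0\} \cup \{p^r/(p^r-1)\}$. Consecutive components of $C$ are separated by gaps in $K$ whose endpoints $v^* < v^{**}$ must satisfy $v^{**}/v^* > L_N$, so each such $K$-gap has length at least $L_N - 1 > 0$; since $K \subseteq [1,\zeta(r)]$ is bounded, only finitely many such disjoint gaps fit, yielding finitely many components $I_1, \ldots, I_\ell$.

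For the density statement, observe that for any $n$ the interval $\sigma_{-r}(n_1) \cdot [1, L_N]$ is a connected subset of $C$ and therefore lies inside a unique $I_k$, which depends only on the tuple $(v_p(n))_{p \le N}$. Hence $E_k := \{v \in K : v \cdot [1, L_N] \subseteq I_k\}$ is clopen in $K$, and since $K$ is a finite product of one-point compactifications of $\mathbf{N}$, its preimage in $\prod_{p\le N} \overline{T_p}$ is a finite disjoint union of product sets $\prod_p S_p$, with each $S_p \subseteq \mathbf{N}\cup\{\infty\}$ either finite (excluding $\infty$) or cofinite (containing $\infty$). Consequently $\{n : \sigma_{-r}(n) \in I_k\}$ is a finite disjoint union of sets of the form $\{n : v_p(n) \in S_p \cap \mathbf{N} \text{ for every } p \le N\}$, each of which is a union of residue classes modulo a fixed integer with rational natural density $\prod_{p \le N}\sum_{a \in S_p \cap \mathbf{N}}(1-1/p)\,p^{-a}$. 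Summing gives a positive rational $d_k$. The main obstacle will be Step~2 — the greedy density argument on the tail of primes $> N$ — which is the key analytic input; the remaining steps amount to topological and combinatorial bookkeeping built on the multiplicative product structure.
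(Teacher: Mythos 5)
Your proposal is correct, and it takes a genuinely different route from the paper. Both arguments start from the same two ingredients: the threshold after which $\prod_{q>p}(1-q^{-r})^{-1} \ge 1+p^{-r}$ holds for every prime $p$ beyond it (your $N$, the paper's $p_{j_0}$), and the greedy lemma that $\sigma_{-r}$ restricted to integers with no prime factor $\le N$ has closure exactly $[1,L_N]$, which is precisely Lemma~\ref{lem:first_step}. From there the paper proceeds by a \emph{backward induction} on the primes $\le N$, adjoining one prime at a time via the recursion of Lemma~\ref{lem:rec} and the scaled-interval Lemma~\ref{lem:techn}, and simultaneously propagating both the intervals and the densities through Lemma~\ref{lem:induction}; this is what turns directly into the algorithm of Section~4 and the explicit density recursion (\ref{equ:d_rec}). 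Your argument is instead \emph{global}: you write the closure at once as $K\cdot[1,L_N]$ with $K$ the compact image of $\prod_{p\le N}\overline{T_p}$ under the product map, deduce finiteness of the number of components by showing that every gap in the closure forces a gap in $K$ of length greater than $L_N-1$, and compute each $d_k$ by decomposing the clopen preimage of the selector $E_k$ in $\prod_{p\le N}\overline{T_p}$ into finitely many clopen rectangles $\prod_p S_p$ with each $S_p$ finite or cofinite, which gives the positive rational density as a finite sum of products of geometric tails. Your route is shorter and more conceptual for the bare existence-and-rationality statement; the paper's inductive route has the advantage of being constructive prime by prime, which is needed for the algorithm and the worked case $r=2$. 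The one step you deliberately left as a black box, the greedy density argument on the tail of primes, is exactly Lemma~\ref{lem:first_step}, and your sketch of why $N$ is finite (comparing $1+p^{-r}$ against the tail product, which exceeds $1$ by roughly $p^{1-r}/((r-1)\log p)$) is the right heuristic, though the paper uses Dusart's explicit prime-gap bound (Lemma~\ref{lem:next_prime_bound}) to make $N$ effectively computable.
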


In fact, we provide a method to give exact closed form expressions for $I_1, \ldots, I_\ell$ and $d_1, \ldots, d_\ell$, assuming to know $r$ with sufficient precision.
As an example, we show that for $r = 2$ it results $\ell = 3$, $I_1 = [1, \pi^2/9]$, $I_2 = [10/9, \pi^2/8]$, $I_3 = [5/4, \pi^2 / 6]$, $d_1 = 1/3$, $d_2 = 1/6$, and $d_3 = 1/2$.

Let us note that the existence of the densities $d_1, \ldots, d_\ell$ is already known.
In fact, by the Erd\H{o}s--Wintner theorem \cite[Chap. 5]{Ell79} it follows that the additive arithmetic function $\log \sigma_{-r}$ has a limiting distribution, and consequently so does $\sigma_{-r}$. 
Thus, for each real interval $I$ the set of positive integers $n$ such that $\sigma_{-r}(n) \in I$ has an asymptotic density.
Our new contributions are the proof that $d_1, \ldots, d_\ell$ are all positive and rational, and the method to compute them.

Lastly, we note that from a result of Defant~[Theorem~3.3]\cite{Def15} it follows that $\ell \to +\infty$ as $r \to +\infty$.
 
\subsection*{Notation}
For each positive integer $j$, we write $p_j$ for the $j$-th prime number and $\mathcal{N}_j$ for the set of all the positive integers without prime factors $ \leq p_j$, also $\mathcal{N}_0 := \mathbf{N}^+$.
We use $\infty$ in place of $+\infty$ and we define
\begin{equation*}
\sigma_{-r}(p^\infty) := \lim_{a \to \infty} \sigma_{-r}(p^a) = \frac1{1 - p^{-r}} ,
\end{equation*}
for all $r > 1$ and prime numbers $p$.
Moreover, $\upsilon_p$ will be the usual $p$-adic valuation for the prime number $p$.
For each $X \subseteq \mathbf{R}$, we denote by $\Cl(X)$ the closure of $X$, while $a \cdot X := \{ax : x \in X\}$ for any $a \in \mathbf{R}$.

\section{Preliminaries}

We begin with an inequality between consecutive prime numbers.

\begin{lem}\label{lem:next_prime_bound}
For each integer $j \geq 463$, we have $p_{j+1} \leq \left(1 + \frac1{2\log^2 p_j}\right) p_j$.
\end{lem}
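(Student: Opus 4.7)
The plan is to deduce this from an explicit upper bound on prime gaps. A convenient such bound, due to Dusart, asserts that for all sufficiently large real $x$ the interval $\bigl[x,\, x\bigl(1 + \tfrac{1}{2\log^2 x}\bigr)\bigr]$ contains at least one prime. Specializing $x = p_j$ then produces a prime $q$ with $p_j < q \leq p_j\bigl(1 + \tfrac{1}{2\log^2 p_j}\bigr)$, and since $p_{j+1} \leq q$ by definition, the desired inequality follows immediately. Note that the map $x \mapsto x\bigl(1 + 1/(2\log^2 x)\bigr)$ is monotone for $x$ in the relevant range, so no further estimation is needed to pass from the continuous to the discrete statement.

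The threshold $j \geq 463$ is chosen so that $p_j$ is at least as large as the explicit cutoff (roughly $3275$) above which Dusart's estimate is in force. The proof therefore splits into two tasks: first, invoke the cited prime gap bound for every $j$ whose $p_j$ lies above this cutoff; second, verify the claimed inequality directly from a table of the small primes for the finitely many remaining indices $j \geq 463$ where Dusart's bound does not yet apply. In practice this second task amounts to checking that for the initial few values of $j$ the actual ratio $p_{j+1}/p_j$ does not exceed $1 + 1/(2\log^2 p_j)$, which is a short numerical verification.

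The main obstacle is bibliographic rather than conceptual: one needs an explicit version of the prime number theorem with an error term strong enough to produce the factor $1/(2\log^2 p_j)$. Proving such an estimate from scratch is a nontrivial piece of analytic number theory, so the proper course here is to cite Dusart's result and dispatch the small boundary cases by direct computation. Since the lemma will only be used qualitatively later in the paper --- to bound successive primes by a slowly growing multiplicative factor --- no sharper constant is required, and the simple combination cited above is enough.
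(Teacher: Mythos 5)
Your proposal is correct and matches the paper's approach exactly: the paper simply cites Dusart (Proposition 1.10 of his 1998 thesis), which states precisely that for all $x \geq 3275$ the interval $\bigl[x, x(1 + \tfrac{1}{2\log^2 x})\bigr]$ contains a prime, and since $p_{463} = 3299 \geq 3275$ the bound applies for every $j \geq 463$ with no residual boundary cases to check.
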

\begin{proof}
See \cite[Proposition 1.10]{Dus98}.
\end{proof}

Now we recall the Euler product for the Riemann zeta function:
\begin{equation*}
\zeta(r) = \prod_{k=1}^\infty \frac1{1 - p_k^{-r}} ,
\end{equation*}
for any $r > 1$, which in particular implies that
\begin{equation}\label{equ:crucial_prod}
\prod_{k \, > \, j} \frac1{1 - p_k^{-r}} = \prod_{k=1}^j (1 - p_k^{-r}) \cdot \zeta(r),
\end{equation}
for each nonnegative integer $j$.
The product on the left-hand side of (\ref{equ:crucial_prod}) will play a crucial role in our proofs.

\begin{lem}\label{lem:prod_bound}
For any $r > 1$, there exist only finitely many positive integers $j$ such that
\begin{equation*}
\prod_{k \, > \, j} \frac1{1 - p_k^{-r}} < 1 + p_j^{-r} .
\end{equation*}
\end{lem}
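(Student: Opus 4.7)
The plan is to prove the contrapositive: for all sufficiently large $j$, one has
\[
\prod_{k > j} \frac{1}{1 - p_k^{-r}} \;\geq\; 1 + p_j^{-r}.
\]
Since $\tfrac{1}{1-x} \geq 1 + x$ for $x \in [0,1)$, and $\prod_k(1+a_k) \geq 1 + \sum_k a_k$ whenever $a_k \geq 0$, we get
\[
\prod_{k > j} \frac{1}{1 - p_k^{-r}} \;\geq\; 1 + \sum_{k > j} p_k^{-r},
\]
so it suffices to show that $\sum_{k > j} p_k^{-r} \geq p_j^{-r}$ for all sufficiently large $j$. The point is thus to exhibit, in the tail of $\sum p_k^{-r}$, many terms whose total comfortably exceeds $p_j^{-r}$.

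To do this, I would use Lemma \ref{lem:next_prime_bound} to argue that a long stretch of primes just after $p_j$ stays comparable in size to $p_j$. Indeed, for $j \geq 463$ and any positive integer $N$, iterating the lemma and using the monotonicity of $\log p_i$ together with $(1+x)^N \leq e^{Nx}$ for $x \geq 0$ yields
\[
p_{j+N} \;\leq\; p_j \prod_{i=j}^{j+N-1}\left(1 + \frac{1}{2\log^2 p_i}\right) \;\leq\; p_j \left(1 + \frac{1}{2\log^2 p_j}\right)^{N} \;\leq\; p_j \exp\!\left(\frac{N}{2\log^2 p_j}\right).
\]
Choosing $N := \lfloor \log^2 p_j \rfloor$ bounds the right-hand side by $\sqrt{e}\,p_j$, whence $p_{j+m}^{-r} \geq e^{-r/2} p_j^{-r}$ for every $m = 1, \ldots, N$.

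Summing these $N$ terms gives
\[
\sum_{k > j} p_k^{-r} \;\geq\; \sum_{m=1}^{N} p_{j+m}^{-r} \;\geq\; \lfloor \log^2 p_j \rfloor \cdot e^{-r/2} \cdot p_j^{-r},
\]
and since $\lfloor \log^2 p_j \rfloor \to \infty$ as $j \to \infty$ while $e^{-r/2}$ is a positive constant depending only on $r$, the prefactor eventually exceeds $1$, closing the argument.

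The main step is the iteration of Lemma \ref{lem:next_prime_bound} that produces a window of $\asymp \log^2 p_j$ consecutive primes after $p_j$, each of size $O(p_j)$; once this is in hand, the rest is a routine application of $(1+x)^N \leq e^{Nx}$ and the elementary series inequality for $\prod(1-x_k)^{-1}$. I expect no further obstacle.
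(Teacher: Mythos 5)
Your proof is correct, but it takes a genuinely different route from the paper's. You bound the tail product from below by $1 + \sum_{k>j}p_k^{-r}$ (using $\tfrac{1}{1-x}\geq 1+x$ and $\prod(1+a_k)\geq 1+\sum a_k$) and then show the tail sum exceeds $p_j^{-r}$ by iterating Lemma~\ref{lem:next_prime_bound} to find a window of $\lfloor\log^2 p_j\rfloor$ primes past $p_j$, each at most $\sqrt{e}\,p_j$; since the window length tends to infinity, the prefactor $\lfloor\log^2 p_j\rfloor e^{-r/2}$ eventually beats $1$. The paper instead considers the ratio $\Pi_j := (1+p_j^{-r})^{-1}\prod_{k>j}(1-p_k^{-r})^{-1}$ and proves it is eventually monotone nonincreasing (using a single application of Lemma~\ref{lem:next_prime_bound} to get $p_{j+1}\leq 2^{1/r}p_j$, hence $p_{j+1}^{-r} > (2p_j^r+1)^{-1}$, and a small algebraic identity); since $\Pi_j\to 1$, it must stay $\geq 1$. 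The paper's argument is tighter and gives the clean explicit threshold $j'\leq\max\{463,\exp(\sqrt{1/(2(2^{1/r}-1))})\}$, which it then uses directly in the algorithm for computing $j_0$. Your argument is more elementary and also yields an effective threshold (roughly the least $j$ with $\lfloor\log^2 p_j\rfloor\geq e^{r/2}$), so it would serve the same purpose, though with a somewhat weaker bound as $r$ grows; both proofs are sound.
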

\begin{proof}
For each positive integer $j$, put
\begin{equation*}
\Pi_j := \frac1{1 + p_j^{-r}} \prod_{k \, > \, j} \frac1{1 - p_k^{-r}} .
\end{equation*}
Thanks to Lemma~\ref{lem:next_prime_bound}, we know that there exists a positive integer 
\begin{equation}\label{equ:max}
j^\prime \leq \max\left\{463, \exp\left(\sqrt{\frac1{2(2^{1/r}-1)}}\right)\right\} ,
\end{equation}
such that for all integers $j \geq j^\prime$ it holds $p_{j+1} \leq 2^{1/r} p_j$, which in turn implies $p_{j+1}^{-r} > (2 p_j^r + 1)^{-1}$.
Therefore,
\begin{equation*}
\Pi_{j+1} := (1 + p_{j}^{-r}) \cdot \frac{1 - p_{j+1}^{-r}}{1 + p_{j+1}^{-r}} \cdot \Pi_j < (1 + p_{j}^{-r}) \cdot \frac{1 - (2p_j^r + 1)^{-1}}{1 + (2p_j^r + 1)^{-1}} \cdot \Pi_j = \Pi_j ,
\end{equation*}
for each integer $j \geq j^\prime$, since the function $[0,1] \to \mathbf{R} : x \mapsto (1-x)/(1+x)$ is monotone nonincreasing.
Thus $(\Pi_j)_{j \geq j^\prime}$ is a monotone nonincreasing sequence.
Now $\Pi_j \to 1$ as $j \to \infty$, so it follows that $\Pi_j \geq 1$ for all integers $j \geq j^\prime$, which is our claim.
\end{proof}

In light of Lemma~\ref{lem:prod_bound}, for each $r > 1$ let $j_0 = j_0(r)$ be the least nonnegative integer such that
\begin{equation}\label{equ:prod_ineq}
\prod_{k \, > \, j} \frac1{1 - p_k^{-r}} \geq 1 + p_j^{-r} ,
\end{equation}
for all the positive integers $j > j_0$.
Actually, the proof of Lemma~\ref{lem:prod_bound} explains how to effectively compute $j_0$:
First, find the minimal $j^\prime$ by checking the inequality $p_{j+1} \leq 2^{1/r} p_j$ for all the positive integers $j$ less than the right-hand side of (\ref{equ:max}).
Second, find $j_0$ by checking the inequality (\ref{equ:prod_ineq}) for all the positive integers less than $j^\prime$.

The next lemma is the first step to determine the closure of $\sigma_{-r}(\mathbf{N}^+)$.

\begin{lem}\label{lem:first_step}
For $r > 1$, we have
\begin{equation*}
\Cl(\sigma_{-r}(\mathcal{N}_{j_0})) = \left[1, \prod_{k \,>\, j_0} \frac1{1 - p_k^{-r}} \right] .
\end{equation*}
\end{lem}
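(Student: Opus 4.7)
The plan is to establish both inclusions in the claimed equality.

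For the easy direction $\Cl(\sigma_{-r}(\mathcal{N}_{j_0})) \subseteq \bigl[1, \prod_{k > j_0} 1/(1 - p_k^{-r})\bigr]$, I would invoke multiplicativity. Every $n \in \mathcal{N}_{j_0}$ factors as $n = \prod_{k > j_0} p_k^{a_k}$ with almost all $a_k = 0$, so $\sigma_{-r}(n) = \prod_{k > j_0} \sigma_{-r}(p_k^{a_k})$. Since each factor satisfies $1 \leq \sigma_{-r}(p_k^{a_k}) < 1/(1 - p_k^{-r})$, the product lies in the required interval, and taking closure completes this direction.

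For the reverse inclusion I plan to use a greedy construction over the primes $q_i := p_{j_0 + i}$, processed in order $i = 1, 2, \ldots$. Writing $M_j := \prod_{k > j} 1/(1 - p_k^{-r})$, for a target $x \in [1, M_{j_0}]$ I set $y_0 := x$ and, for each $i \geq 1$, let $a_i$ be the largest element of $\{0, 1, 2, \ldots\} \cup \{\infty\}$ with $\sigma_{-r}(q_i^{a_i}) \leq y_{i-1}$, and then put $y_i := y_{i-1}/\sigma_{-r}(q_i^{a_i})$. The key step is to prove by induction on $i$ the invariant $y_i \in [1, M_{j_0 + i}]$. The lower bound is built into the choice of $a_i$. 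For the upper bound, when $a_i$ is finite the maximality of $a_i$ forces $y_{i-1} < \sigma_{-r}(q_i^{a_i + 1}) \leq \sigma_{-r}(q_i^{a_i}) + q_i^{-r}$, so $y_i < 1 + q_i^{-r}$; inequality (\ref{equ:prod_ineq}) applied at $j = j_0 + i > j_0$ then yields $1 + q_i^{-r} \leq M_{j_0 + i}$. When $a_i = \infty$, the bound $y_i \leq M_{j_0 + i}$ rearranges to $y_{i-1} \leq M_{j_0 + i - 1}$, which is the inductive hypothesis.

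Once the invariant is established, $M_{j_0 + i} \to 1$ as $i \to \infty$ (it is the tail of the Euler product of $\zeta(r)$), hence $y_i \to 1$ and $\prod_{k \leq i} \sigma_{-r}(q_k^{a_k}) = x/y_i \to x$. If every $a_k$ is a nonnegative integer, the truncations $n_i := \prod_{k \leq i} q_k^{a_k}$ belong to $\mathcal{N}_{j_0}$ and their images under $\sigma_{-r}$ converge to $x$. For any exponents $a_k = \infty$ that may arise I would replace each such one by a sufficiently large finite integer (using $\sigma_{-r}(q_k^a) \to \sigma_{-r}(q_k^\infty)$ as $a \to \infty$) and pass to a diagonal subsequence still in $\mathcal{N}_{j_0}$ and still converging to $x$. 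The main obstacle is exactly the upper half of the invariant, since this is the sole place where the defining property of $j_0$ enters: without (\ref{equ:prod_ineq}) the greedy choice could overshoot and leave $y_i$ too large to be brought back to $1$ by the remaining primes.
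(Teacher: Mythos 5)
Your proof is correct. You run the same greedy construction as the paper but with different (and, I think, cleaner) bookkeeping: instead of tracking the accumulated product $x_j$ built up from $1$ and then splitting cases according to whether the greedy exponents $a_j$ are finite infinitely often (as the paper does, handling the eventually-infinite case by contradiction), you track the residual $y_i = x/\prod_{k\le i}\sigma_{-r}(q_k^{a_k})$ and prove by induction the invariant $1 \le y_i \le \prod_{k > j_0+i}(1-p_k^{-r})^{-1}$, from which $y_i \to 1$ follows at once since the tail product tends to $1$. The defining inequality (\ref{equ:prod_ineq}) of $j_0$ enters both arguments at the analogous point — namely to control the step where $a_i$ is finite — so the two proofs rest on the same key fact; your invariant simply subsumes the paper's two-case analysis into a single uniform estimate, yielding a more streamlined squeeze argument. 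The remaining details you sketch (approximating $\infty$ exponents by large finite ones via a diagonal argument) are routine and correct.
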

\begin{proof}
Pick any $x \in \left[1, \prod_{k > j_0} \frac1{1 - p_k^{-r}} \right]$.
Define the sequences $(x_j)_{j \geq j_0}$ and $(a_j)_{j > j_0}$ via the following greedy process:
$x_{j_0} := 1$, while for each integer $j > j_0$ let $a_j$ be the greatest $a \in \mathbf{N} \cup \{\infty\}$ such that $x_{j-1} \cdot \sigma_{-r}(p_j^a) \leq x$, and put also $x_j := x_{j-1} \cdot \sigma_{-r}\big(p_j^{a_j}\big)$.
By construction, $(x_j)_{j \geq j_0}$ is a monotone nondecreasing sequence such that $x_j \leq x$, for each integer $j \geq j_0$.
In particular, there exists $\ell := \displaystyle \lim_{j \to \infty} x_j$ and it holds $\ell \leq x$.
Moreover, $x_j \in \Cl(\sigma_{-r}(\mathcal{N}_{j_0}))$ for any integer $j \geq j_0$, hence $\ell \in \Cl(\sigma_{-r}(\mathcal{N}_{j_0}))$.
If $a_j < \infty$ for some integer $j > j_0$, then
\begin{equation*}
x < x_{j-1} \cdot \sigma_{-r}(p_j^{a_j+1}) < x_{j-1} \cdot \frac1{1 - p_j^{-r}} .
\end{equation*}
Therefore, if $a_j < \infty$ for infinitely many integers $j > j_0$, then $x = \ell \in \Cl(\sigma_{-r}(\mathcal{N}_{j_0}))$.
Suppose instead that $a_j < \infty$ only for finitely many integers $j > j_0$.
Let $j_1$ be the least integer $\geq j_0$ such that $a_j = \infty$ for all integers $j > j_1$.
Thus we have
\begin{equation*}
x_j = x_{j_1} \cdot \prod_{k \, = \, j_1 + 1}^{j} \frac1{1 - p_k^{-r}} ,
\end{equation*}
for all integers $j > j_1$, so that
\begin{equation*}
\ell = x_{j_1} \cdot \prod_{k \,>\, j_1} \frac1{1 - p_k^{-r}} .
\end{equation*}
If $j_1 = j_0$, then 
\begin{equation*}
\prod_{k \,>\, j_0} \frac1{1 - p_k^{-r}} = \ell \leq x \leq \prod_{k \,>\, j_0} \frac1{1 - p_k^{-r}} ,
\end{equation*}
hence $x = \ell \in \Cl(\sigma_{-r}(\mathcal{N}_{j_0}))$.
If $j_1 > j_0$, then, by the minimality of $j_1$, we have $a_{j_1} < \infty$ so that
\begin{equation*}
x_{j_1} \cdot \prod_{k \,>\, j_1} \frac1{1 - p_k^{-r}} = \ell \leq x < x_{j_1-1} \cdot \sigma_{-r}(p_{j_1}^{a_{j_1}+1}) = x_{j_1} \cdot \frac{\sigma_{-r}(p_{j_1}^{a_{j_1}+1})}{\sigma_{-r}(p_{j_1}^{a_{j_1}})} \leq x_{j_1} \cdot (1 + p_{j_1}^{-r}) ,
\end{equation*}
but this is absurd since $\prod_{k > j_1} \frac1{1 - p_k^{-r}} \geq 1 + p_{j_1}^{-r}$.

So we have proved that $\left[1, \prod_{k > j_0} \frac1{1 - p_k^{-r}} \right] \subseteq \Cl(\sigma_{-r}(\mathcal{N}_{j_0}))$.
The other inclusion is immediate, since $\sigma_{-r}(\mathcal{N}_{j_0}) \subseteq \left[1, \prod_{k > j_0} \frac1{1 - p_k^{-r}} \right[$.
\end{proof}

Now we give a recursive formula to express $\Cl(\sigma_{-r}(\mathcal{N}_{j-1}))$, for a positive integer $j \leq j_0$, in terms of $\Cl(\sigma_{-r}(\mathcal{N}_j))$.

\begin{lem}\label{lem:rec}
For each positive integer $j \leq j_0$, we have
\begin{equation*}
\Cl(\sigma_{-r}(\mathcal{N}_{j-1})) = \bigcup_{a \,\in\, \mathbf{N} \,\cup\, \{\infty\}} \sigma_{-r}(p_j^a) \cdot \Cl(\sigma_{-r}(\mathcal{N}_j)) .
\end{equation*}
\end{lem}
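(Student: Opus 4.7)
The plan is to decompose each $n \in \mathcal{N}_{j-1}$ uniquely as $n = p_j^a m$ with $a \in \mathbf{N}$ and $m \in \mathcal{N}_j$, so that multiplicativity of $\sigma_{-r}$ gives the identity
\begin{equation*}
\sigma_{-r}(\mathcal{N}_{j-1}) = \bigcup_{a \,\in\, \mathbf{N}} \sigma_{-r}(p_j^a) \cdot \sigma_{-r}(\mathcal{N}_j).
\end{equation*}
From this equality the lemma reduces to passing to closures on both sides while keeping track of the $a = \infty$ contribution.

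For the inclusion $\supseteq$, I would argue that for each fixed finite $a$ the map $y \mapsto \sigma_{-r}(p_j^a) \cdot y$ is a homeomorphism of $\mathbf{R}$, so $\sigma_{-r}(p_j^a) \cdot \Cl(\sigma_{-r}(\mathcal{N}_j)) = \Cl\bigl(\sigma_{-r}(p_j^a) \cdot \sigma_{-r}(\mathcal{N}_j)\bigr) \subseteq \Cl(\sigma_{-r}(\mathcal{N}_{j-1}))$. For the $a = \infty$ term, given any $y \in \Cl(\sigma_{-r}(\mathcal{N}_j))$ I would approximate $\sigma_{-r}(p_j^\infty) \cdot y$ by the points $\sigma_{-r}(p_j^a) \cdot y$ with $a$ large, each already in $\Cl(\sigma_{-r}(\mathcal{N}_{j-1}))$ by the previous observation, and conclude by closedness.

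For the reverse inclusion $\subseteq$, I would take $x \in \Cl(\sigma_{-r}(\mathcal{N}_{j-1}))$ and a sequence $n_i \in \mathcal{N}_{j-1}$ with $\sigma_{-r}(n_i) \to x$, write $n_i = p_j^{a_i} m_i$ with $m_i \in \mathcal{N}_j$, and observe that both $\sigma_{-r}(p_j^{a_i})$ and $\sigma_{-r}(m_i)$ lie in the bounded interval $[1, \zeta(r)[$. Passing to a subsequence, either $a_i$ stabilizes to some finite value $a$ (so $\sigma_{-r}(m_i)$ converges to some $y \in \Cl(\sigma_{-r}(\mathcal{N}_j))$ and $x = \sigma_{-r}(p_j^a) \cdot y$), or $a_i \to \infty$, in which case $\sigma_{-r}(p_j^{a_i}) \to \sigma_{-r}(p_j^\infty) > 0$, so $\sigma_{-r}(m_i) = \sigma_{-r}(n_i)/\sigma_{-r}(p_j^{a_i})$ also converges, to some $y \in \Cl(\sigma_{-r}(\mathcal{N}_j))$, giving $x = \sigma_{-r}(p_j^\infty) \cdot y$.

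The main subtlety I expect is the bookkeeping around the $a = \infty$ term: one must both include it in the right-hand side (it is really there in the limit, even though $p_j^\infty$ is not a positive integer) and verify that the union on the right is actually closed, so that no extra closure operation is needed on the outside. Both points are handled uniformly by the compactness / subsequence argument above, using crucially the positivity of $\sigma_{-r}(p_j^\infty)$, which is guaranteed by $r > 1$.
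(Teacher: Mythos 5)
Your proof is correct and follows essentially the same route as the paper: the forward inclusion is argued identically, via the unique factorization $n = p_j^a m$ with $m \in \mathcal{N}_j$, multiplicativity, and passing to a subsequence along which $a_i$ either stabilizes or tends to infinity. The reverse inclusion, which the paper dismisses as ``immediate,'' is the part you elaborate (homeomorphism for finite $a$, limiting argument for $a = \infty$), and your elaboration is accurate.
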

\begin{proof}
Pick $x \in \Cl(\sigma_{-r}(\mathcal{N}_{j-1}))$, so that there exists a sequence $(n_k)_{k \geq 0}$ of elements of $\mathcal{N}_{j-1}$ such that $\sigma_{-r}(n_k) \to x$, as $k \to \infty$.
Let $a_k := \upsilon_{p_j}(n_k)$ and $m_k := n_k / p_j^{a_k}$, for each integer $k \geq 0$.
On the one hand, if $(a_k)_{k \geq 0}$ is bounded, then by passing to a subsequence of $(n_k)_{k \geq 0}$, we can assume that $a_k = a$ for all integers $k \geq 0$, for some $a \in \mathbf{N}$.
Therefore,
\begin{equation*}
x = \sigma_{-r}(p_j^a) \lim_{k \to \infty} \sigma_{-r}(m_k) \in \sigma_{-r}(p_j^a) \cdot \Cl(\sigma_{-r}(\mathcal{N}_j)) ,
\end{equation*}
since obviously $m_k \in \mathcal{N}_j$ for each integer $k \geq 0$.
On the other hand, if $(a_k)_{k \geq 0}$ is unbounded, then by passing to a subsequence of $(n_k)_{k \geq 0}$, we can assume that $a_k \to \infty$, as $k \to \infty$.
Therefore,
\begin{equation*}
x = \lim_{k \to \infty} \sigma_{-r}(p_j^{a_k}) \lim_{k \to \infty} \sigma_{-r}(m_k) \in \sigma_{-r}(p_j^\infty) \cdot \Cl(\sigma_{-r}(\mathcal{N}_j)) .
\end{equation*}
So we have proved that 
\begin{equation*}
\Cl(\sigma_{-r}(\mathcal{N}_{j-1})) \subseteq \bigcup_{a \,\in\, \mathbf{N} \,\cup\, \{\infty\}} \sigma_{-r}(p_j^a) \cdot \Cl(\sigma_{-r}(\mathcal{N}_j)) ,
\end{equation*}
the other inclusion is immediate.
\end{proof}

Finally, we need a technical lemma about certain unions of scaled intervals.

\begin{lem}\label{lem:techn}
Given $\beta > \alpha \geq 1$ and $j \in \mathbf{N}^+$, let $a_0$ be the least nonnegative integer such that
\begin{equation*}
\frac{\sigma_{-r}\big(p_j^{a_0+1}\big)}{\sigma_{-r}(p_j^{a_0})} \leq \frac{\beta}{\alpha} .
\end{equation*}
For each nonnegative integer $a < a_0$, put $J_a := \sigma_{-r}(p_j^a) \cdot [\alpha, \beta]$. 
Put also 
\begin{equation*}
J_{a_0} := [\sigma_{-r}(p_j^{a_0})\alpha, \sigma_{-r}(p_j^\infty)\beta] .
\end{equation*}
Then the intervals $J_0, \ldots, J_{a_0}$ are pairwise disjoint and it holds 
\begin{equation*}
\bigcup_{a \,\in\, \mathbf{N} \,\cup\, \{\infty\}} \sigma_{-r}(p_j^a) \cdot [\alpha, \beta] = J_0 \cup \cdots \cup J_{a_0} .
\end{equation*}
\end{lem}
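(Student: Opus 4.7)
The plan is to reduce everything to the elementary monotonicity of the ratios $\rho_a := \sigma_{-r}(p_j^{a+1})/\sigma_{-r}(p_j^a)$ and then to track how the scaled intervals $\sigma_{-r}(p_j^a)\cdot[\alpha,\beta]$ sit relative to each other on the real line. Writing $q := p_j^{-r} \in (0,1)$ so that $\sigma_{-r}(p_j^a) = 1 + q + \cdots + q^a$, I would first observe that
\begin{equation*}
\rho_a = 1 + \frac{q^{a+1}}{1 + q + \cdots + q^a}
\end{equation*}
is strictly decreasing in $a$ (the numerator shrinks, the denominator grows) and converges to $1$. Since $\beta/\alpha > 1$, this guarantees the existence of the threshold $a_0$, and moreover $\rho_a > \beta/\alpha$ for every $a < a_0$ while $\rho_a \leq \beta/\alpha$ for every $a \geq a_0$.

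Next I would translate the ratio condition into a geometric one: $\rho_a > \beta/\alpha$ is equivalent to $\sigma_{-r}(p_j^{a+1})\alpha > \sigma_{-r}(p_j^a)\beta$, which says that the scaled interval for $a+1$ begins strictly to the right of where the scaled interval for $a$ ends. Consequently, for $0 \leq a < a_0$ the intervals $J_0, J_1, \ldots, J_{a_0-1}$ are pairwise disjoint and linearly ordered on the line, and the same inequality applied at $a = a_0 - 1$ (if $a_0 \geq 1$) shows that $J_{a_0-1}$ lies strictly to the left of $J_{a_0}$, since $J_{a_0}$ begins at $\sigma_{-r}(p_j^{a_0})\alpha$. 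The degenerate case $a_0 = 0$ is automatic.

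In the complementary regime $a \geq a_0$, the reverse inequality $\sigma_{-r}(p_j^{a+1})\alpha \leq \sigma_{-r}(p_j^a)\beta$ says that consecutive closed scaled intervals overlap or abut, so their union over finite $a \geq a_0$ is a single connected set starting at $\sigma_{-r}(p_j^{a_0})\alpha$. Because $\sigma_{-r}(p_j^a) < \sigma_{-r}(p_j^\infty)$ for every finite $a$, this union reaches every point below $\sigma_{-r}(p_j^\infty)\beta$ but never the supremum itself; adjoining the term $a = \infty$, which contributes the closed interval $\sigma_{-r}(p_j^\infty)\cdot[\alpha,\beta]$ containing the missing right endpoint, then produces exactly $J_{a_0}$.

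The argument is really just careful bookkeeping of overlaps between consecutive scaled intervals, so the only mild obstacle is making sure the $a = \infty$ term is treated correctly, so that $J_{a_0}$ is indeed closed on the right as stated. Once the strict monotonicity of $\rho_a$ is established, both the pairwise disjointness of $J_0, \ldots, J_{a_0}$ and the claimed equality of unions follow from combining the two regimes above.
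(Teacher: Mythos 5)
Your argument is correct and follows essentially the same path as the paper's proof: establish that the ratio sequence $\rho_a$ is monotone decreasing to $1$ so that $a_0$ exists, translate $\rho_a > \beta/\alpha$ for $a < a_0$ into strict separation of consecutive scaled intervals (giving pairwise disjointness), and translate $\rho_a \leq \beta/\alpha$ for $a \geq a_0$ into overlapping of consecutive scaled intervals so that the union over $a \geq a_0$, together with the $a=\infty$ term, coalesces into $J_{a_0}$. The only cosmetic differences are that you make the substitution $q = p_j^{-r}$ explicit and you spell out in more detail why the $a=\infty$ term is needed to capture the right endpoint of $J_{a_0}$, which the paper leaves to the reader.
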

\begin{proof}
The sequence $(\sigma_{-r}(p_j^{a+1}) / \sigma_{-r}(p_j^a))_{a \geq 0}$ is monotone nonincreasing and tends to $1$ as $a \to \infty$, while $\beta / \alpha > 1$.
Therefore, $a_0$ is well-defined and it holds $\sigma_{-r}(p_j^{a+1}) / \sigma_{-r}(p_j^a) \leq \beta / \alpha$, i.e., $\sigma_{-r}(p_j^{a+1}) \alpha \leq \sigma_{-r}(p_j^a) \beta$, for each integer $a \geq a_0$, which in turn implies
\begin{align*}
\bigcup_{\substack{a \,\in\, \mathbf{N} \,\cup\, \{\infty\} \\ a \,\geq\, a_0}} \sigma_{-r}(p_j^a) \cdot [\alpha, \beta] = [\sigma_{-r}(p_j^{a_0})\alpha, \sigma_{-r}(p_j^\infty)\beta] = J_{a_0} .
\end{align*}
Finally, by the minimality of $a_0$, we have $\sigma_{-r}(p_j^{a+1}) \alpha > \sigma_{-r}(p_j^a) \beta$ for each nonnegative integer $a < a_0$, hence $J_0, \ldots, J_{a_0}$ are pairwise disjoint.
The claim follows.
\end{proof}

\section{Proof of Theorem~\ref{thm:closure}}

We are now ready to prove Theorem~\ref{thm:closure}.
Actually, we shall prove the following slightly stronger lemma, from which Theorem~\ref{thm:closure} follows at once by choosing $j=0$.

\begin{lem}\label{lem:induction}
Fix any $r > 1$. 
Then for each nonnegative integer $j \leq j_0$, there exist: a positive integer $\ell_j$, pairwise disjoint closed intervals $I_{j,1}, \ldots, I_{j,\ell_j}$, and positive rationals $d_{j,1}, \ldots, d_{j,\ell_j}$, such that
\begin{equation}\label{equ:clj}
\Cl(\sigma_{-r}(\mathcal{N}_j)) = I_{j,1} \cup \ldots \cup I_{j,\ell_j} ,
\end{equation}
while
\begin{equation}\label{equ:dk}
d_{j,k} = \lim_{x \to \infty} \frac{\#\{n \in \mathcal{N}_j : n \leq x, \; \sigma_{-r}(n) \in I_{j,k}\}}{x} ,
\end{equation}
for $k=1,\ldots,\ell_j$.
\end{lem}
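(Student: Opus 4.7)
The plan is to argue by backward induction on $j$, starting at $j = j_0$ and descending to $j = 0$. For the base case, Lemma~\ref{lem:first_step} directly supplies the single interval $I_{j_0,1} := \left[1, \prod_{k > j_0}(1 - p_k^{-r})^{-1}\right]$ and gives $\ell_{j_0} = 1$. Since every element of $\mathcal{N}_{j_0}$ automatically has $\sigma_{-r}$-value inside $I_{j_0,1}$, the required density $d_{j_0,1}$ is simply the natural density of $\mathcal{N}_{j_0}$ in $\mathbf{N}^+$, which a standard inclusion--exclusion shows to equal the positive rational $\prod_{k=1}^{j_0}(1 - p_k^{-1})$.

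For the inductive step, I assume the statement for some integer $j$ with $1 \leq j \leq j_0$. Combining Lemma~\ref{lem:rec} and the inductive hypothesis,
\begin{equation*}
\Cl(\sigma_{-r}(\mathcal{N}_{j-1})) \;=\; \bigcup_{k=1}^{\ell_j}\; \bigcup_{a \,\in\, \mathbf{N} \,\cup\, \{\infty\}} \sigma_{-r}(p_j^a) \cdot I_{j,k} .
\end{equation*}
For each $k$, Lemma~\ref{lem:techn} rewrites the inner union as a finite pairwise disjoint union of closed intervals $J_{k,0}, \ldots, J_{k,a_0(k)}$. The resulting finite family $\{J_{k,a}\}$ consists of closed bounded intervals, so their union, which equals $\Cl(\sigma_{-r}(\mathcal{N}_{j-1}))$, has only finitely many connected components, each itself a closed interval; I take these components to be the required $I_{j-1,1}, \ldots, I_{j-1,\ell_{j-1}}$.

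For the densities, I exploit the canonical partition $\mathcal{N}_{j-1} = \bigsqcup_{a \geq 0} p_j^a \cdot \mathcal{N}_j$: every $n \in \mathcal{N}_{j-1}$ factors uniquely as $p_j^a m$ with $m \in \mathcal{N}_j$, and by the inductive hypothesis $\sigma_{-r}(m)$ belongs to exactly one $I_{j,k}$. Consequently $\sigma_{-r}(n) = \sigma_{-r}(p_j^a) \sigma_{-r}(m)$ lies in the unique interval $J_{k,\min(a,a_0(k))}$, and hence in a unique $I_{j-1,k'}$ determined by the pair $(k,\min(a,a_0(k)))$. Setting $S(k') := \{(k,b) : J_{k,b} \subseteq I_{j-1,k'}\}$, summing contributions and justifying the exchange of $\lim_x$ with $\sum_a$ via the crude tail bound $\#\{m \in \mathcal{N}_j : m \leq x/p_j^a\} \leq x p_j^{-a}$ yields
\begin{equation*}
d_{j-1,k'} \;=\; \sum_{(k,b) \,\in\, S(k')} d_{j,k} \cdot c_{k,b} ,
\end{equation*}
where $c_{k,b} = p_j^{-b}$ if $b < a_0(k)$ and $c_{k,b} = p_j^{1-a_0(k)}/(p_j - 1)$ if $b = a_0(k)$. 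Each $c_{k,b}$ is a positive rational and each $d_{j,k}$ is positive rational by the inductive hypothesis, so $d_{j-1,k'}$ inherits both properties.

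The main obstacle is the merging step: distinct pairs $(k,a)$ and $(k',a')$ can produce overlapping intervals $J_{k,a}$ and $J_{k',a'}$, forcing the final $I_{j-1,k'}$ to be strictly coarser than the $J_{k,a}$. What rescues the density argument is precisely the inductive disjointness of the $I_{j,k}$, which makes the assignment $m \mapsto k$ unambiguous and hence attributes each $n \in \mathcal{N}_{j-1}$ to a single pair $(k,b)$ and thence to a single $I_{j-1,k'}$, even when the underlying $J_{k,b}$ overlap in $\mathbf{R}$; once this bookkeeping is observed, the density additivity and the preservation of rationality across the inductive step are straightforward.
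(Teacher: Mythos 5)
Your proposal is correct and mirrors the paper's argument: the same backward induction, the same base case from Lemma~\ref{lem:first_step}, the same use of Lemmas~\ref{lem:rec} and~\ref{lem:techn} followed by taking connected components, and the same density computation via the unique factorization $n = p_j^a m$ with a tail truncation to justify interchanging the limit and the sum (your indicator set $S(k')$ plays exactly the role of the paper's $\delta_{j,k,a,h}$). The one point worth flagging is that positivity of $d_{j-1,k'}$ rests on $S(k')$ being nonempty, which holds because each $I_{j-1,k'}$ is a connected component of the union of the $J$'s and hence contains at least one of them; you state the conclusion but could make this step explicit.
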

\begin{proof}
We proceed by (backward) induction on $j$.
For $j = j_0$, it is sufficient to take $\ell_{j_0} := 1$, 
\begin{equation*}
I_{j_0,1} := \left[1, \prod_{k \,>\, j_0} \frac1{1 - p_k^{-r}} \right] ,
\end{equation*}
and
\begin{equation*}
d_{j_0,1} := \lim_{x \to \infty} \frac{\#\{n \in \mathcal{N}_{j_0} : n \leq x\}}{x} = \prod_{k=1}^{j_0} \left(1 - \frac1{p_k}\right) ,
\end{equation*}
so that the claim follows by Lemma~\ref{lem:first_step}.

Now we assume to have proved the claim for $j \geq 1$, and we shall prove it for $j - 1$.
For $k=1,\ldots,\ell_j$, write $I_{j,k} = [\alpha_{j,k}, \beta_{j,k}]$ and let $a_{j,k}$ be the least nonnegative integer such that
\begin{equation*}
\frac{\sigma_{-r}\big(p_j^{a_{j,k}+1}\big)}{\sigma_{-r}\big(p_j^{a_{j,k}}\big)} \leq \frac{\beta_{j,k}}{\alpha_{j,k}}.
\end{equation*}
Then, put $J_{j,k,a} := \sigma_{-r}(p_j^a) \cdot I_{j,k}$ for all the nonnegative integers $a < a_{j,k}$ and also
\begin{equation*}
J_{j,k,a_{j,k}} := \left[\sigma_{-r}\big(p_j^{a_{j,k}}\big)\alpha_{j,k}, \sigma_{-r}(p_j^\infty)\beta_{j,k} \right] .
\end{equation*}
By Lemma~\ref{lem:techn}, it follows that $J_{j,k,0}, \ldots, J_{j,k,a_{j,k}}$ are pairwise disjoint and
\begin{equation}\label{equ:bigcupik}
\bigcup_{a \,\in\, \mathbf{N} \,\cup\, \{\infty\}} \sigma_{-r}(p_j^a) \cdot I_{j,k} = J_{j,k,0} \cup \cdots \cup J_{j,k,a_{j,k}} .
\end{equation}
At this point, from Lemma~\ref{lem:rec} and equations (\ref{equ:clj}) and (\ref{equ:bigcupik}), we obtain
\begin{align*}
\Cl(\sigma_{-r}(\mathcal{N}_{j-1})) &= \bigcup_{a \,\in\, \mathbf{N} \,\cup\, \{\infty\}} \sigma_{-r}(p_j^a) \cdot \Cl(\sigma_{-r}(\mathcal{N}_j)) = \bigcup_{a \,\in\, \mathbf{N} \,\cup\, \{\infty\}} \sigma_{-r}(p_j^a) \cdot \bigcup_{k=1}^{\ell_j} I_{j,k} \\
 &= \bigcup_{a \,\in\, \mathbf{N} \,\cup\, \{\infty\}} \bigcup_{k=1}^{\ell_j} \sigma_{-r}(p_j^a) \cdot I_{j,k} = \bigcup_{k=1}^{\ell_j} \bigcup_{a \,\in\, \mathbf{N} \,\cup\, \{\infty\}} \sigma_{-r}(p_j^a) \cdot I_{j,k} \\
 &= \bigcup_{k=1}^{\ell_j} (J_{j,k,0} \cup \cdots \cup J_{j,k,a_{j,k}}) = I_{j-1,1} \cup \cdots \cup I_{j-1,\ell_{j-1}},
\end{align*}
for some positive integer $\ell_{j-1}$ and some pairwise disjoint closed intervals \mbox{$I_{j-1,1}, \ldots, I_{j-1,\ell_{j-1}}$}.
In particular, note that for each $k \in \{1,\ldots,\ell_j\}$, each nonnegative integer $a \leq a_{j,k}$, and each $h \in \{1,\ldots,\ell_{j-1}\}$, it holds $J_{j,k,a} \subseteq I_{j-1,h}$, and in such a case we set $\delta_{j,k,a,h} := 1$; or it holds $J_{j,k,a} \cap I_{j-1,h} = \varnothing$, and in such other case we set $\delta_{j,k,a,h} := 0$.
Fix any $h \in \{1, \ldots, \ell_{j-1}\}$ and put for convenience
\begin{equation*}
\mathcal{S} := \{n \in \mathcal{N}_{j-1} : \sigma_{-r}(n) \in I_{j-1,h}\} ,
\end{equation*}
so that it remains only to prove that $\mathcal{S}$ has a positive rational asymptotic density $d_{j-1,k}$.
On~the one hand, any $n \in \mathcal{N}_{j-1}$ can be written in a unique way as $n = p_j^a m$, with $a \in \mathbf{N}$ and $m \in \mathcal{N}_j$.
On the other hand, by induction hypothesis $I_{j,1}, \ldots, I_{j,\ell_j}$ are pairwise disjoint and their union contains $\sigma_{-r}(\mathcal{N}_j)$.
Therefore, for any $x > 0$ we have
\begin{align*}
\#\{n \in \mathcal{S} : n \leq x\} &= \sum_{a=0}^\infty \#\!\left\{m \in \mathcal{N}_j : m \leq x / p_j^a, \; \sigma_{-r}(p_j^a) \sigma_{-r}(m) \in I_{j-1,h}\right\} \\
&= \sum_{k=1}^{\ell_j} \sum_{a=0}^\infty \#\!\left\{m \in \mathcal{N}_j : m \leq x / p_j^a, \; \sigma_{-r}(m) \in I_{j,k}, \;\sigma_{-r}(p_j^a) \sigma_{-r}(m) \in I_{j-1,h}\right\} .
\end{align*}
If $k \in \{1,\ldots,\ell_j\}$ and $m \in \mathcal{N}_j$ are such that $\sigma_{-r}(m) \in I_{j,k}$, then we have that $\sigma_{-r}(p_j^a) \sigma_{-r}(m) \in I_{j-1,h}$, for some $a \in \mathbf{N}$, if and only if $\delta_{j,k,\min\{a,a_{j,k}\},h} = 1$.
As a consequence,
\begin{align}\label{equ:long1}
\#\{n \in \mathcal{S} : n \leq x\} &= \sum_{k=1}^{\ell_j} \sum_{a=0}^\infty \delta_{j,k,\min\{a,a_{j,k}\},h} \#\!\left\{m \in \mathcal{N}_j : m \leq x / p_j^a, \; \sigma_{-r}(m) \in I_{j,k}\right\} \\
 &= \sum_{k=1}^{\ell_j} \left(\sum_{a=0}^{a_{j,k}-1} \delta_{j,k,a,h} \#\!\left\{m \in \mathcal{N}_j : m \leq x / p_j^a, \; \sigma_{-r}(m) \in I_{j,k}\right\} \right.  \nonumber \\
 &\phantom{====}+ \delta_{j,k,a_{j,k},h} \left.\sum_{a=a_{j,k}}^{\infty} \#\!\left\{m \in \mathcal{N}_j : m \leq x / p_j^a, \; \sigma_{-r}(m) \in I_{j,k}\right\}\right) \nonumber .
\end{align}
For any integer $A \geq \max\{a_{j,1}, \ldots, a_{j,\ell_j}\}$, since there are at most $x/p_j^{A+1}$ positive integers not exceeding $x$ and divisible by $p_j^{A+1}$, we can truncate (\ref{equ:long1}) getting
\begin{align}\label{equ:long2}
\# & \{n \in \mathcal{S} : n \leq x\} = \sum_{k=1}^{\ell_j} \left(\sum_{a=0}^{a_{j,k}-1} \delta_{j,k,a,h} \#\!\left\{m \in \mathcal{N}_j : m \leq x / p_j^a, \; \sigma_{-r}(m) \in I_{j,k}\right\} \right.\\
 &\phantom{====}+ \delta_{j, k, a_{j,k}, h} \left.\sum_{a=a_{j,k}}^A \#\!\left\{m \in \mathcal{N}_j : m \leq x / p_j^a, \; \sigma_{-r}(m) \in I_{j,k}\right\}\right) + O\!\left(\frac{x}{p_j^{A+1}}\right) \nonumber .
\end{align}
Thus, dividing (\ref{equ:long2}) by $x$, letting $x \to \infty$, and using (\ref{equ:dk}), we obtain
\begin{equation*}
\liminf_{x \to \infty }\frac{\#\{n \in \mathcal{S} : n \leq x\}}{x} = \sum_{k=1}^{\ell_j} d_{j,k} \left(\sum_{a=0}^{a_{j,k}-1} \frac{\delta_{j,k,a,h}}{p_j^a} + \delta_{j, k, a_{j,k}, h} \sum_{a=a_{j,k}}^A \frac1{p_j^a}\right) + O\!\left(\frac1{p_j^{A+1}}\right) ,
\end{equation*}
and similarly for the limit superior.
Therefore, as $A \to \infty$, it follows that
\begin{equation*}
\lim_{x \to \infty}\frac{\#\{n \in \mathcal{S} : n \leq x\}}{x} = \sum_{k=1}^{\ell_j} d_{j,k} \left(\sum_{a=0}^{a_{j,k}-1} \frac{\delta_{j,k,a,h}}{p_j^a} + \frac{\delta_{j, k, a_{j,k}, h}}{p_j^{a_{j,k}-1}(p_j-1)}\right) \in \mathbf{Q}^+ .
\end{equation*}
In conclusion, putting
\begin{equation}\label{equ:d_rec}
d_{j-1,h} := \sum_{k=1}^{\ell_j} d_{j,k} \left(\sum_{a=0}^{a_{j,k}-1} \frac{\delta_{j,k,a,h}}{p_j^a} + \frac{\delta_{j, k, a_{j,k}, h}}{p_j^{a_{j,k}-1}(p_j-1)}\right) ,
\end{equation}
for $h=1,\ldots,\ell_{j-1}$, we have proved the claim for $j-1$ and the proof is complete.
\end{proof}

\section{Algorithm to compute the closure of $\sigma_{-r}(\mathbf{N}^+)$}

Now we are ready to illustrate an algorithm to exactly compute the closure of $\sigma_{-r}(\mathbf{N}^+)$, given any $r > 1$.
Well, the words ``algorithm'' and ``exactly'' need some explanation.
We assume our algorithm runs on a machine always able to decide inequalities involving finite products of the quantities $\sigma_{-r}(p_j^a)$ and $\prod_{k \,>\, j} \frac1{1-p_k^{-r}}$, with $j \in \mathbf{N}^+$ and $a \in \mathbf{N} \cup \{\infty\}$.
For the most $r$, probably almost all $r$ with respect to Lebesgue measure, inequalities of that kind can be decided by numerical computation, using (\ref{equ:crucial_prod}).
However, in general those inequalities are probably undecidable (how to exclude that the equality holds?), although this poses no practical problems.
At the end of its execution, the algorithm returns the extreme points of the intervals $I_1, \ldots, I_\ell$ in the statement of Theorem~\ref{thm:closure}, as finite products of the quantities above, and also $d_1, \ldots, d_\ell$.

The idea behind the algorithm is just to make constructive the proof of Lemma~\ref{lem:induction}, we give only the essential details.

\newpage
\begin{algorithmic}[1]
\Require{$r > 1$.}
\Ensure{
A positive integer $\ell_0$, pairwise finite closed intervals $I_{0,1}, \ldots, I_{0,\ell_0}$, and positive rational numbers $d_{0,1}, \ldots, d_{0,\ell_0}$, such that
\begin{equation*}
\Cl(\sigma_{-r}(\mathbf{N}^+)) = I_{0,1} \cup \cdots \cup I_{0,\ell_0} ,
\end{equation*}
while
\begin{equation*}
d_{0,k} = \lim_{x \to \infty} \frac{\#\{n \in \mathbf{N}^+ : n \leq x, \; \sigma_{-r}(n) \in I_{0,k}\}}{x} ,
\end{equation*}
for $k=1,\ldots,\ell_0$.
}
\setstretch{1.3}
\State Compute $j_0$ as explained just after the proof of Lemma~\ref{lem:prod_bound}.
\State $\ell_{j_0} \leftarrow 1$
\State $I_{j_0,1} \leftarrow \left[1, \prod_{k > j_0} \frac1{1 - p_k^{-r}}\right]$
\State $d_{j_0,1} \leftarrow \prod_{k=1}^{j_0}\left(1-\frac1{p_k}\right)$
\For{$j = j_0, j_0 - 1, \ldots, 1$}
 \For{$k = 1,\ldots,\ell_j$}
 \State $a_{j,k} \leftarrow \min\!\big\{a \in \mathbf{N} : \sigma_{-r}(p_j^{a+1}) / \sigma_{-r}(p_j^a) \leq \sup(I_{j,k}) / \inf(I_{j,k})\big\}$
  \For{$a = 0,\ldots,a_{j,k}-1$}
   \State $J_{j,k,a} \leftarrow \sigma_{-r}(p_j^a) \cdot I_{j,k}$
  \EndFor
  \State $J_{j,k, a_{j,k}} \leftarrow [\sigma_{-r}(p_j^a)\inf(I_{j,k}), \sigma_{-r}(p_j^\infty)\sup(I_{j,k})]$
 \EndFor
 \State Compute pairwise disjoint closed intervals $I_{j-1,1}, \ldots, I_{j-1,\ell_{j-1}}$ such that
 \begin{equation*}
 \bigcup_{k=1}^{\ell_j} (J_{j, k, 0} \cup \cdots \cup J_{j, k, a_{j,k}}) = I_{j-1,1} \cup \cdots \cup I_{j-1,\ell_{j-1}} .
 \end{equation*}
 \For{$k = 1, \ldots, \ell_j$, $a = 0, \ldots, a_{j,k}$, $h=1,\ldots,\ell_{j-1}$}
  \If{$J_{j, k, a} \subseteq I_{j-1,h}$}
   \State $\delta_{j, k, a, h} \leftarrow 1$
  \Else
   \State $\delta_{j, k, a, h} \leftarrow 0$
  \EndIf
 \EndFor
 \For{$h = 1,\ldots,\ell_{j-1}$}
  \State $d_{j-1,h} \leftarrow \sum_{k=1}^{\ell_j} d_{j,k} \left(\sum_{a=0}^{a_{j,k}-1} \frac{\delta_{j,k,a,h}}{p_j^a} + \frac{\delta_{j, k, a_{j,k}, h}}{p_j^{a_{j,k}-1}(p_j-1)}\right)$.
 \EndFor
\EndFor
\end{algorithmic}
\vskip 1em

The author implemented this algorithm in the Python programming language, with floating point arithmetic.
Running the implementation on a personal computer, he computed $\Cl(\sigma_{-r}(\mathbf{N}^+))$ from $r = 1.5$ to $3.5$ with a step of $0.01$, leading to the plot of Figure~\ref{fig:plot}.

\begin{figure}[h]
\begin{center}
\includegraphics[scale=4.0]{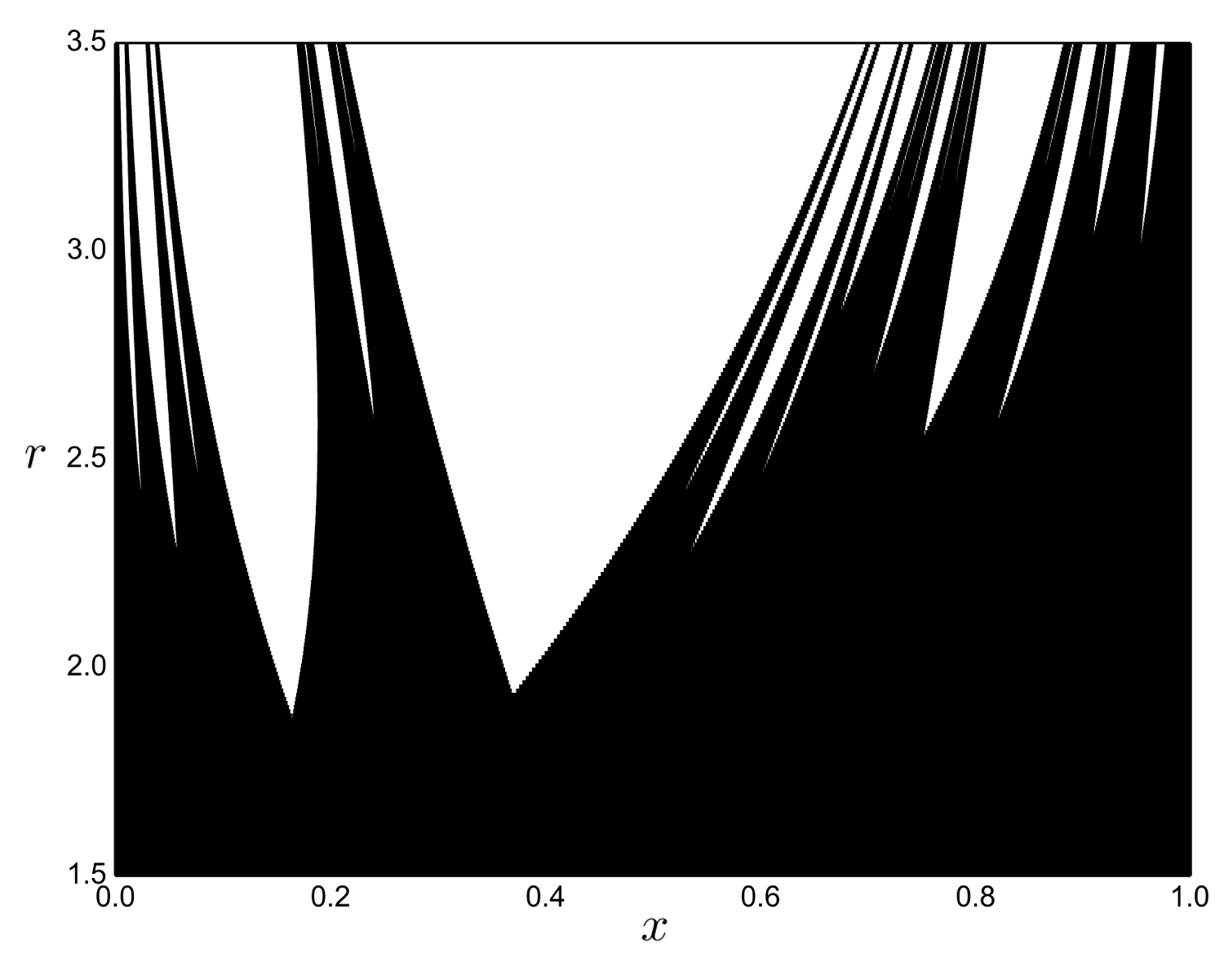}
\caption{
A point $(x, r)$ of the plot is black if and only if $(\zeta(r) - 1)x + 1 \in \Cl(\sigma_{-r}(\mathbf{N}^+))$.
}
\label{fig:plot}
\end{center}
\end{figure}

\section{Closure of $\sigma_{-2}(\mathbf{N}^+)$}
In this section, we compute the closure of $\sigma_{-2}(\mathbf{N}^+)$.
We shall use the same notation of the proof of Lemma~\ref{lem:induction}.
First, in light of Lemma~\ref{lem:next_prime_bound} a quick computation shows that $p_{j+1} \leq 2^{1/2} p_j$ for all positive integers $j > 4$.
Moreover,
\begin{equation*}
\prod_{k \, > \, 1} \frac1{1 - p_k^{-2}} < 1 + p_1^{-2}, \;\; \prod_{k \, > \, 2} \frac1{1 - p_k^{-2}} < 1 + p_2^{-2},\; \text{ and } \prod_{k \, > \, 3} \frac1{1 - p_k^{-2}} > 1 + p_3^{-2} , 
\end{equation*}
hence $j_0 = 2$.
Thus, $\ell_2 = 1$, 
\begin{equation*}
I_{2,1} = \left[1, \prod_{k \,>\, 2} \frac1{1 - p_k^{-2}} \right] = \left[1, \left(1-2^{-2}\right)\cdot \left(1-3^{-2}\right)\cdot \zeta(2) \right] = \left[1, \frac{\pi^2}{9} \right] ,
\end{equation*}
and
\begin{equation*}
d_{2,1} = \prod_{k=1}^{2} \left(1 - \frac1{p_k}\right) = \left(1 - \frac1{2}\right)\cdot \left(1 - \frac1{3}\right) = \frac1{3} .
\end{equation*}
Now $\sigma_{-2}(3^1) / \sigma_{-2}(3^0) > \pi^2/9$ and $\sigma_{-2}(3^2) / \sigma_{-2}(3^1) < \pi^2/9$, hence $a_{2,1} = 1$, so that
\begin{equation*}
J_{2,1,0} = \sigma_{-2}(3^0) \cdot I_{2,1} = I_{2,1} = \left[1, \frac{\pi^2}{9}\right],
\end{equation*}
and
\begin{equation*}
J_{2,1,1} = \left[\sigma_{-2}(3^1), \sigma_{-2}(3^\infty)\cdot \frac{\pi^2}{9}\right] = \left[\frac{10}{9}, \frac{\pi^2}{8}\right] .
\end{equation*}
Therefore, $\ell_1 = 2$ and we can take $I_{1,1} = J_{2,1,0}$ and $I_{1,2} = J_{2,1,1}$, so that by (\ref{equ:d_rec}) we have
\begin{equation*}
d_{1,1} = d_{2,1} \cdot \frac1{p_2^0} = \frac1{3} , \text{ and } d_{1,2} = d_{2,1} \cdot \frac1{p_2^0 (p_2-1)} = \frac1{6} .
\end{equation*}
Now $\sigma_{-2}(2^1) / \sigma_{-2}(2^0) > \pi^2/9$ and $\sigma_{-2}(2^2) / \sigma_{-2}(2^1) < \pi^2/9$, hence $a_{1,1} = 1$, so that
\begin{equation*}
J_{1,1,0} = \sigma_{-2}(2^0) \cdot I_{1,1} = I_{1,1} = \left[1, \frac{\pi^2}{9}\right] ,
\end{equation*}
and
\begin{equation*}
J_{1,1,1} = \left[\sigma_{-2}(2^1), \sigma_{-2}(2^\infty)\cdot \frac{\pi^2}{9}\right] = \left[\frac{5}{4}, \frac{4\pi^2}{27}\right] .
\end{equation*}
Also, $\sigma_{-2}(2^1) / \sigma_{-2}(2^0) > \tfrac{\pi^2}{8} / \tfrac{10}{9}$ and $\sigma_{-2}(2^2) / \sigma_{-2}(2^1) < \tfrac{\pi^2}{8} / \tfrac{10}{9}$, hence $a_{1,2} = 1$, so that
\begin{equation*}
J_{1,2,0} = \sigma_{-2}(2^0) \cdot I_{1,2} = I_{1,2} = \left[\frac{10}{9}, \frac{\pi^2}{8}\right] ,
\end{equation*}
and
\begin{equation*}
J_{1,2,1} = \left[\sigma_{-2}(2^1) \cdot \frac{10}{9}, \sigma_{-2}(2^\infty)\cdot \frac{\pi^2}{8}\right] = \left[\frac{25}{18}, \frac{\pi^2}{6}\right] .
\end{equation*}
The union of $J_{1,1,0}$, $J_{1,1,1}$, $J_{1,2,0}$, and $J_{1,2,1}$ has $\ell_0 = 3$ connected components, namely
\begin{equation*}
I_{0,1} = J_{1,1,0} = \left[1, \frac{\pi^2}{9}\right], \;\; I_{0,2} = J_{1,2,0} = \left[\frac{10}{9}, \frac{\pi^2}{8}\right] , 
\end{equation*}
and
\begin{equation*}
I_{0,3} = J_{1,1,1} \cup J_{1,2,1} = \left[\frac{5}{4}, \frac{\pi^2}{6}\right] .
\end{equation*}
Lastly, by (\ref{equ:d_rec}) it follows
\begin{equation*}
d_{0,1} = d_{1,1} \cdot \frac1{p_1^0} = \frac1{3}, \;\; d_{0,2} = d_{1,2} \cdot \frac1{p_1^0} = \frac1{6} ,
\end{equation*}
and
\begin{equation*}
d_{0,3} = d_{1,1} \cdot \frac1{p_1-1} + d_{1,2}\cdot \frac1{p_1-1} = \frac1{2} .
\end{equation*}
The claim about $\Cl(\sigma_{-2}(\mathbf{N}^+))$ given in the introduction is proved.

\section{Concluding remarks}

It seems likely that the proof of Theorem~\ref{thm:closure} could be adapted to multiplicative arithmetic functions $f : \mathbf{N}^+ \to \mathbf{R}^+$ other than $\sigma_{-r}$, satisfying at least the following hypothesis:
\begin{enumerate}[(i).]
\item For each prime number $p$, the sequence $(f(p^a))_{a \geq 0}$ is monotone nondecreasing and 
\begin{equation*}
f(p^\infty) := \lim_{a \to \infty} f(p^a)
\end{equation*}
exists finite.
\item For each prime number $p$, the sequence $(f(p^{a+1})/f(p^a))_{a \geq 0}$ is monotone nonincreasing.
\item It holds 
\begin{equation*}
\prod_{k \, > \, j} f(p_k^\infty) < f(p_j),
\end{equation*}
only for finitely many positive integers $j$.
\end{enumerate}

\subsection*{Acknowledgments}

The author thanks Colin~Defant (University of Florida) for his careful reading of the paper and useful comments.

\bibliographystyle{amsalpha}
\bibliography{closure}

\end{document}